\numberwithin{equation}{section}
\newtheorem{theorem}{Theorem}[section]
\newtheorem{claim}[theorem]{Claim}
\newtheorem{definition}[theorem]{Definition}
\newtheorem{lemma}[theorem]{Lemma}
\newtheorem{proposition}[theorem]{Proposition}
\newenvironment{proof}[1][Proof]{\noindent\textbf{#1.} }{\ \rule{0.5em}{0.5em}}
\begin{document}

\title{{\huge \textbf{Characterizations of Normal and Binormal Surfaces in $%
G_{3}$ }}}
\author{Dae Won Yoon $^\dag$ and Z\"{u}hal K\"{u}\c{c}\"{u}karslan Y\"{u}zba%
\c{s}\i\ $^\ddag$ }
\date{}
\maketitle
{\footnotesize {%
\centerline  {$\dag$ Department of Mathematics  Education
and RINS }}}

{\footnotesize \centerline  {Gyeongsang National University }}

{\footnotesize \centerline{ Jinju 52828, Republic of Korea} }

{\footnotesize \centerline { {E-mail address}:   {\tt
dwyoon@gnu.ac.kr}} }

{\footnotesize \vskip 0.2 cm {%
\centerline  { $\ddag$ Department of
Mathematics}}}

{\footnotesize {\centerline  {F\i rat University }}}

{\footnotesize \centerline{23119 Elazig, Turkey } }

{\footnotesize 
\centerline { {E-mail address}:   {\tt zuhal2387@yahoo.com.tr
}} }
\begin{abstract}
In this paper, our aim is to give surfaces in the Galilean 3-space $\mathbf{G%
}_{3}$ with the property that there exist four geodesics through each point
such that every surface built with the normal lines and the binormal lines
along these geodesics is a surface with a minimal surface and a constant
negative Gaussian curvature. We show that $\psi $ should be an isoparametric
surface in $\mathbf{G}_{3}$: A plane or a circular hyperboloid.
\end{abstract}

\renewcommand\leftmark {\centerline{  \rm Isoparametric Surfaces Via Normal Surfaces in $G_{3}$ }}
\renewcommand\rightmark {\centerline{ \rm  Isoparametric Surfaces Via Normal Surfaces in $G_{3}$  }}

\renewcommand{\thefootnote}{} \footnote{{\ {Corresponding author:}} Zuhal K.
Yuzbasi.} \footnote{%
2010 \textit{AMS Mathematics Subject Classification:} 53A35, 53Z05.}

\footnote{{\ {Key words and phrases: }} Normal surface, Binormal surface,
Geodesics, Galilean space.
\par
The first author was supported by Basic Science Research Program through the
National Research Foundation of Korea funded by the Ministry of
Education(2015R1D1A1A01060046)}

\renewcommand{\thefootnote}{\arabic{footnote}} \setcounter{footnote}{0}

\section{Introduction}

A helical curve (or a helix) is a geometric curve which has non-vanishing
constant curvature $\kappa $ and non-vanishing constant torsion $\tau $ \cite%
{bar}. \ It is known that if the curve is a straight line or plane curve, then 
$\kappa =0$ or $\tau =0,$ respectively \cite{kuh}$.$ On the other hand, a
family of curves with constant torsion but the non-constant curvature is called
anti-Salkowski curves \cite{sal}.

From the view of the differential geometry, there are different
characterizations of surfaces. Generally, these type
characterizations of surfaces are done in terms of the Gaussian curvature
and mean curvature of the surface, \cite{dede1,dede2}. On the contrary, if there are the only
characterizations of the surface with constant principal curvatures, then
these surfaces are called the isoparametric surfaces.

Relevant to this matter which is characterizations of surfaces with constant
principal curvatures, in the paper \cite{tam}, Tamura showed that complete
surfaces of constant mean curvature in $E^{3}$ on which there exist two
helical geodesics through each point are planes, spheres or circular
cylinders. Recently, Lopez et al. improved surfaces in $E^{3}$ with the
property that there exist four geodesics through each point such that every
ruled surface built with the normal lines along these geodesics is a surface
with constant mean curvature.

In this paper, we improve this characterization to the Galilean 3-space $%
\mathbf{G}_{3}$ of negative curvature. To be more precise, we investigate
the following results for surfaces in the Galilean 3-space: First, we define
surfaces in the Galilean 3-space $\mathbf{G}_{3}$ with the property that
there exist four geodesics through each point such that every surface built
with the normal lines and the binormal lines along these geodesics is a
surface, which a minimal surface, with a constant negative curvature or zero
curvature. Second, we show that the surface should be an isoparametric
surface in $\mathbf{G}_{3}$: \ A plane or a circular hyperboloid. For this
reason through our paper, we shall show the following theorems:

\begin{theorem}
Let $\ \psi $ be a connected surface in Galilean 3-space $\mathbf{G}_{3}.$
If there exist four geodesics through each point of $\ \psi $ with the
property that the normal surface constructed along these geodesics is a
minimal surface with a constant negative curvature or zero curvature, then $%
\psi $ is a plane or a circular hyperboloid.
\end{theorem}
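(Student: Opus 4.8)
The plan is to reduce the global statement to a local analysis of the normal ruled surfaces, followed by an isoparametric classification. I would first fix a point $p\in\psi$ and one of the four geodesics $\alpha$ through it, parametrized by Galilean arc length $s$, so that $\alpha'=T$ is non-isotropic and the Frenet apparatus $\{T,N,B\}$ with curvature $\kappa$ and torsion $\tau$ is available, governed by the Galilean Frenet equations $T'=\kappa N$, $N'=\tau B$, $B'=-\tau N$. Since $\alpha$ is a geodesic, its principal normal $N$ points in the surface-normal direction, so the normal surface along $\alpha$ is the admissible ruled surface $X(s,t)=\alpha(s)+t\,N(s)$. I would then compute its first and second Galilean fundamental forms, and from them the Gaussian curvature $K$ and the mean curvature $H$, keeping in mind that the ruling $N$ is isotropic and hence the degenerate part of the metric must be handled with the $\mathbf{G}_{3}$ formulas rather than the Euclidean ones.

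The next step imposes the hypotheses $H\equiv 0$ and $K\equiv c$ (constant, with $c\le 0$) on $X(s,t)$. Because these identities must hold for every value of the ruling parameter $t$, I would expand $H$ and $K$ as rational expressions in $t$ whose coefficients are functions of $\kappa,\tau,\kappa',\tau'$ along $\alpha$, and equate the coefficients of like powers of $t$ to zero. The expectation, exactly as in the Euclidean model of Tamura and of L\'opez et al., is that minimality forces $\tau'=0$ together with $\kappa'\tau=0$, while constancy of $K$ removes the remaining freedom; the conclusion I am aiming at is that $\kappa$ and $\tau$ are constant along each of the four geodesics, that is, every one of them is a Galilean helix.

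Having established that the four geodesics through $p$ are helices, I would translate the curve invariants into surface invariants: along a geodesic the curvature $\kappa$ equals the normal curvature $k_{n}$ of $\psi$ in the geodesic's tangent direction and $\tau$ equals the geodesic torsion $\tau_{g}$ in that direction. Using the $\mathbf{G}_{3}$ analogues of Euler's formula and of the geodesic-torsion formula, which express $k_{n}$ and $\tau_{g}$ in terms of the principal curvatures $k_{1},k_{2}$ and the angle $\theta$ that the direction makes with a principal direction, the existence at each point of four distinct directions along which $k_{n}$ and $\tau_{g}$ are prescribed constants yields an overdetermined system. Solving it forces $k_{1}$ and $k_{2}$ to be constant on $\psi$, so $\psi$ is isoparametric; invoking the classification of isoparametric surfaces in $\mathbf{G}_{3}$ then gives that $\psi$ is a plane or a circular hyperboloid, which is the assertion.

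I expect the main obstacle to be the first two steps: obtaining the correct $\mathbf{G}_{3}$ expressions for $H$ and $K$ of the normal surface and, from the vanishing of $H$ and constancy of $K$ in $t$, rigorously extracting that $\kappa$ and $\tau$ are constant. The Galilean Frenet relations differ from the Euclidean ones and the isotropic character of the ruling direction $N$ makes the fundamental forms degenerate, so the bookkeeping of which coefficient combinations must vanish is delicate; once the helix conclusion is in hand, the passage to constant principal curvatures and the final classification are comparatively routine.
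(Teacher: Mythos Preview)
Your plan diverges from what actually happens in $\mathbf{G}_3$ at the crucial second step, and the divergence is fatal for the rest of the argument. When you carry out the computation you describe, with $g_1=1$, $g_2=0$, $w=1$, and $L_{22}=0$, the Galilean formulas give
\[
H_\varphi=\frac{g_2^{2}L_{11}-2g_1g_2L_{12}+g_1^{2}L_{22}}{2w^{2}}=0
\quad\text{identically,}\qquad
K_\varphi=\frac{L_{11}L_{22}-L_{12}^{2}}{w^{2}}=-\tau^{2}.
\]
So minimality of the normal surface is automatic and imposes \emph{no} constraint on $\kappa,\tau$; your expected relations ``$\tau'=0$ together with $\kappa'\tau=0$'' never appear. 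Constancy of $K_\varphi$ yields only $\tau=\mathrm{const.}$, and $\kappa$ remains completely free. Consequently the four geodesics are not helices in general but anti-Salkowski curves (constant $\tau$, variable $\kappa$), planar curves ($\tau=0$), or straight lines, and your Euler/geodesic-torsion step, which needs both $k_n=\kappa$ and $\tau_g=\tau$ to be constant along each geodesic, cannot get started.

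The paper closes this gap by a different mechanism: it first proves exactly the trichotomy above (its Lemma~3.3), then argues that at a non-umbilic point at most two of the four geodesics can be of the planar or straight-line type, so at least two are of anti-Salkowski type; it then invokes Tamura's characterization of surfaces admitting two proper helical geodesics through each point to force the isoparametric conclusion. If you want to salvage your outline, you need either an independent argument that $\kappa$ is constant along the anti-Salkowski geodesics (which the hypotheses alone do not provide), or to replace your Euler-formula step by the counting-plus-Tamura route the paper uses.
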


\begin{theorem}
Let $\ \psi $ be a connected surface in Galilean 3-space $\mathbf{G}_{3}.$
If there exist four geodesics through each point of $\ \psi $ with the
property that the binormal surface constructed along these geodesics is a
minimal surface with a constant negative curvature or zero curvature, then $%
\psi $ is a plane or a circular hyperboloid.
\end{theorem}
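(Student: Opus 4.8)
The plan is to run the argument in close parallel with the proof of Theorem 1, the only structural change being that the ruling is now the binormal field $B$ and that we invoke the Galilean Frenet relation $B' = -\tau N$ in place of $N' = \tau B$. First I would fix a geodesic $\gamma(s)$ of $\psi$ parametrised by Galilean arc length, so that $\gamma' = T$, and recall that along a geodesic the surface normal of $\psi$ agrees up to sign with the principal normal $N$; hence $B$ is tangent to $\psi$ along $\gamma$. I then form the binormal surface $\Phi(s,t) = \gamma(s) + t\,B(s)$ and differentiate, getting $\Phi_{s} = T - t\tau N$ and $\Phi_{t} = B$. Because $B$ has vanishing time component, the first coordinate of $\Phi$ is simply $s$, so the isotropic coefficients are $g_{1} = \Phi^{1}_{s} = 1$ and $g_{2} = \Phi^{1}_{t} = 0$; the Galilean cross product then gives the unit normal $U = -N$ and $W = 1$.

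Next I would compute the second fundamental form. From $\Phi_{ss} = (\kappa - t\tau')N - t\tau^{2}B$, $\Phi_{st} = -\tau N$ and $\Phi_{tt} = 0$, together with $U = -N$, one obtains $L_{11} = t\tau' - \kappa$, $L_{12} = \tau$ and $L_{22} = 0$. Substituting into the Galilean curvature formulas $K = (L_{11}L_{22} - L_{12}^{2})/W^{2}$ and $2H = (g_{1}^{2}L_{22} - 2g_{1}g_{2}L_{12} + g_{2}^{2}L_{11})/W^{2}$ yields
\[
K = -\tau^{2}, \qquad H = 0 .
\]
Thus the binormal surface is automatically minimal (every term of $2H$ carries a factor $L_{22}=0$ or $g_{2}=0$), so the minimality requirement is met and the effective hypothesis is that the Gaussian curvature be a constant $\leq 0$. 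Since $K = -\tau^{2}$, this is equivalent to $\tau$ being constant along $\gamma$; hence each of the four geodesics through a given point is forced to have constant torsion, either a nonzero constant (giving $K<0$) or zero (giving $K=0$).

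The decisive step is then to convert ``constant torsion along four geodesics through each point'' into the isoparametric condition. Since $\gamma$ is a geodesic, its torsion equals the geodesic torsion $\tau_{g}$ of $\psi$, which at a point $p$ is a fixed trigonometric expression in the direction angle $\theta$ of the geodesic relative to the principal directions and in the principal curvatures $\kappa_{1},\kappa_{2}$. Differentiating $\tau_{g}$ along the geodesic and substituting the Galilean analogue of Liouville's formula for $d\theta/ds$ (with $\kappa_{g}=0$), the condition $d\tau_{g}/ds\big|_{p}=0$ becomes a trigonometric polynomial $f_{p}(\theta)=0$ whose coefficients are built from $\kappa_{1},\kappa_{2}$ and their tangential derivatives. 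Evaluating at the four distinct directions $\theta_{1},\dots,\theta_{4}$ determined by the four geodesics forces $f_{p}\equiv 0$, and reading off its coefficients gives that $\kappa_{1}$ and $\kappa_{2}$ are constant on $\psi$; that is, $\psi$ is isoparametric. I would then invoke the classification of isoparametric surfaces in $\mathbf{G}_{3}$ to conclude that $\psi$ is a plane or a circular hyperboloid.

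The main obstacle I anticipate is twofold. The first is carrying out the curvature computation correctly in the degenerate Galilean metric, where the naive Weingarten map collapses ($\det$ and $\operatorname{tr}$ of $U_{,i}=A_{i}^{j}\Phi_{,j}$ both vanish) and one must instead use the isotropic normal together with the $g_{i}=\Phi^{1}_{,i}$ conventions to recover $K=-\tau^{2}$. The second, and more delicate, is the counting in the final step: one must check that $f_{p}$ has low enough harmonic degree that four prescribed zeros force it to vanish identically -- this is precisely what pins the number of required geodesics at four -- and then argue that the resulting coefficient relations upgrade ``$\kappa_{1}-\kappa_{2}$ constant'' to constancy of both principal curvatures. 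Everything else follows routinely once $K=-\tau^{2}$ and $H=0$ are established and the parallel with Theorem 1 is made explicit.
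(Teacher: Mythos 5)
Your computational core reproduces the paper's Proposition 4.2 exactly: the same derivatives $\phi_{,s}=T-t\tau N$, $\phi_{,s,s}=(\kappa-t\tau')N-t\tau^{2}B$, $\phi_{,t,s}=-\tau N$, the same isotropic data $g_{1}=1$, $g_{2}=0$, $w=1$, the same second fundamental form $L_{11}=t\tau'-\kappa$, $L_{12}=\tau$, $L_{22}=0$, and the same conclusion $K=-\tau^{2}$, $H=0$, so that constancy of $K$ forces $\tau$ to be constant (possibly zero) along each geodesic. One omission relative to the paper: you do not treat the case where the geodesic is a line segment (no Frenet frame), which the paper handles separately via the polynomial identity in $t$ coming from \eqref{26}, concluding $K_{\phi}=0$ there; this case also supplies the third branch of the trichotomy that the paper's endgame relies on.

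Where you genuinely diverge is the concluding step, and this is where the gap lies. The paper converts ``$\tau$ constant'' into a trichotomy of curve types (anti-Salkowski, planar, line segment), argues that at a non-umbilic point at most two of the four geodesics can be planar or straight, so two are (eventually circular) helical geodesics, and then invokes Tamura's theorem on surfaces admitting two proper helical geodesics through each point, finishing with an open-closed connectedness argument on the umbilic set. You instead propose to express the geodesic torsion as a trigonometric function of a direction angle $\theta$ relative to principal directions, differentiate via a Liouville-type formula, and argue that four zeros kill the resulting trigonometric polynomial, yielding the isoparametric condition directly. That is essentially the L\'opez--Ruiz-Hern\'andez argument in $E^{3}$, but it does not transfer verbatim to $\mathbf{G}_{3}$: the induced metric on a surface is degenerate, each tangent plane carries a distinguished isotropic direction, there is no Euclidean angle with $\tau_{g}=(\kappa_{2}-\kappa_{1})\sin\theta\cos\theta$, and consequently the ``harmonic degree'' count that pins the number four is exactly the step you would have to rebuild from scratch in the Galilean setting. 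As written, the final step is a plan rather than a proof, and the one place you flag as delicate is precisely where the Galilean degeneracy breaks the Euclidean template; you would either need to derive the Galilean analogues of the Euler and Liouville formulas and redo the count, or fall back on the paper's route through the curve-type trichotomy and the two-helical-geodesics characterization.
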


\section{Preliminaries}

The Galilean 3-space $\mathbf{G}_{3}$ is a Cayley-Klein space equipped with
the projective metric of signature $(0,0,+,+),$ given in \cite{mol}. The
absolute figure of the Galilean space consists of an ordered triple $%
\{\omega ,f,I\}$ in which $\omega $ is the ideal (absolute) plane, $\ f$ \
is the line (absolute line) in $\omega $ and $I$ is the fixed elliptic
involution of $f$. We introduce homogeneous coordinates in $\mathbf{G}_{3}$
in such a way that the absolute plane $\omega $ is given by $x_{0}=0$, the
absolute line $f$ by $x_{0}=x_{1}=0$ and the elliptic involution by

\begin{equation}
(0:0:x_{2}:x_{3})\rightarrow (0:0:x_{3}:-x_{2}).  \label{1a}
\end{equation}%
A plane is called Euclidean if it contains $f$, otherwise it is called
isotropic or i.e., planes $x=const.$ are Euclidean, and so is the plane $%
\omega $. Other planes are isotropic. In other words, an isotropic plane
does not involve any isotropic direction.

\begin{definition}
\textrm{(\cite{Milin})} Let $x=$ $\left( x_{1},x_{2},x_{3}\right) $ and $y=$ 
$\left( y_{1},y_{2},y_{3}\right) $ be any two vectors in $\mathbf{G}_{3}.$ A
vector $a$ is called isotropic if $x_{1}=0$, otherwise it is called
non-isotropic. Then the Galilean scalar product of these vectors is given by%
\begin{equation}
\left\langle x,y\right\rangle =\left\{ 
\begin{array}{cc}
x_{1}y_{1}, & \text{if }x_{1}\neq 0\text{ or }y_{1}\neq 0 \\ 
x_{2}y_{2}+x_{3}y_{3}, & \text{if \ }x_{1}=0\text{ and }y_{1}=0%
\end{array}%
\right. .  \label{1}
\end{equation}
\end{definition}

\begin{definition}
\textrm{(\cite{Yaglom})} \textbf{\ }Let $x=$ $\left(
x_{1},x_{2},x_{3}\right) $ and \textbf{$y$}$=$ $\left(
y_{1},y_{2},y_{3}\right) $ be any two vectors in $\mathbf{G}_{3},$ the
Galilean cross product is given as 
\begin{equation}
x\mathbf{\wedge }y\mathbf{=}\left\{ \left\vert 
\begin{array}{ccc}
0 & e_{2} & e_{3} \\ 
x_{1} & x_{2} & x_{3} \\ 
y_{1} & y_{2} & y_{3}%
\end{array}%
\right\vert \right. .  \label{2}
\end{equation}
\end{definition}

\begin{definition}
\textrm{(\cite{sa})} Let $T$ be the unit tangent vector of a curve $\alpha $
on a surface $\psi $ in ${\mathbf{G}_{3}}$, and $N$ be the unit normal
vector to the surface at the point $\alpha (s)$ of $\alpha $, respectively.
Let $B=T\mathbf{\wedge }N$ be the tangential-normal. Then $\{T,N,B\}$ is an
orthonormal frame at $\alpha (s)$ in $\mathbf{G}_{3}$. The frame is called a
Galilean Darboux frame or a tangent-normal frame and it is expressed as 
\begin{equation}
\begin{aligned} T^{\prime }(s) &=k_{g}(s)B(s)+k_{n}(s)N(s), \\ N^{\prime
}(s) &=\tau _{g}(s)B(s), \\ B ^{\prime } (s)&=-\tau _{g}(s)N(s),
\end{aligned}  \label{3}
\end{equation}%
where $k_{g}$, $k_{n}$ and $\tau _{g}$ are the geodesic curvature, normal
curvature, and geodesic torsion, respectively.
\end{definition}

For the curvature $\kappa $ of $\alpha $, $\kappa ^{2}=k_{g}^{2}+k_{n}^{2}$
holds. Also, a curve $\alpha $ is a geodesic (an asymptotic curve or a line
of curvature) if and only if $k_{g}$ ( $k_{n}$ or $\tau _{g}$) vanishes,
respectively.

Suppose now that $\alpha $ is a geodesic. Then $k_{g}=0$ and $T^{\prime
}=k_{n}(s)N(s),$ which implies that, up change of orientation on $\psi $ if
necessary, $N(s)$ is the normal vector to $\alpha .$ So, from the Galilean
Darboux frame, we can write $k_{n}=\kappa $ and $\tau _{g}=\tau $. If
replacing these in Equation\ 
\eqref{3}%
, then the formulae becomes 
\begin{eqnarray}
T^{\prime }(s) &=&\kappa (s)N(s),  \label{3a} \\
N^{\prime }(s) &=&\tau (s)B(s),  \notag \\
B^{\prime }(s) &=&-\tau (s)N(s).  \notag
\end{eqnarray}

Let the equation of a surface $\psi =\psi (s,t)$ in $\mathbf{G}_{3}$ is
given by%
\begin{equation*}
\psi (s,t)=\left( x(s,t),y(s,t),z(s,t)\right) .
\end{equation*}%
Then the unit isotropic normal vector field $\eta $ on $\psi (s,t)$ is given
by 
\begin{equation}
\eta =\frac{\psi _{,s}\mathbf{\wedge }\psi _{,t}}{\left\Vert \psi _{,s}%
\mathbf{\wedge }\psi _{,t}\right\Vert },  \label{4}
\end{equation}%
where the partial differentiations with respect to $s$ and $t$, that is, $%
\psi _{,s}=\dfrac{\partial \psi (s,t)}{\partial s}$ and $\psi _{,t}=\dfrac{%
\partial \psi (s,t)}{\partial t}.$

From 
\eqref{1a}
and $w=\left\Vert \psi _{,s}\mathbf{\wedge }\psi _{,t}\right\Vert $, we get
the isotropic unit vector $\delta $ in the tangent plane of the surface as%
\begin{equation}
\delta =\frac{x_{,2}\psi _{,s}-x_{,1}\psi _{,t}}{w},  \label{4a}
\end{equation}%
where $x_{,1}=\dfrac{\partial x(s,t)}{\partial s}$, $x_{,2}=\dfrac{\partial
x(s,t)}{\partial t}$ and $\left\langle \eta ,\delta \right\rangle =0$, $%
\delta ^{2}=1.$

Let us define 
\begin{equation*}
\begin{aligned} & g_{1}=x _{,1},\text{ }g_{2}=x _{,2}, \text{ } g_{ij}=
g_{i}g_{j},\\ & g^{1}= \frac {x_{,2}}{w}, \text{ } g^2 =-\frac {x_{,1}}{w},
\text{ } g^{ij}= g^i g^j ~~ (i,j=1,2), \\ & h_{11}= \langle\widetilde{\psi}
_{,1}, \text{ }\widetilde{\psi} _{,1}\rangle, \text{ } h_{12}= \langle
\widetilde{\psi} _{,1} ,\widetilde{\psi}_{,2}\rangle, \text{ } h_{22}=
\langle \widetilde{\psi} _{,2} ,\widetilde{\psi}_{,2}\rangle, \end{aligned}
\end{equation*}%
where $\widetilde{\psi} _{,1}$ and $\widetilde{\psi} _{,2}$ are the
projections of vectors {$\psi $}$_{,1}$ and {$\psi $}$_{,2}$ onto the $yz$%
-plane, respectively. Then, the corresponding matrix of the first
fundamental form $ds^{2}$ of the surface $\psi (s,t)$ is given by (cf. \cite%
{SD}) 
\begin{equation}
ds^{2}=%
\begin{pmatrix}
ds_{1}^{2} & 0 \\ 
0 & ds_{2}^{2}%
\end{pmatrix}%
,  \label{a4}
\end{equation}%
where $ds_{1}^{2}=(g_{1}ds+g_{2}dt)^{2}$ and $%
ds_{2}^{2}=h_{11}ds^{2}+2h_{12}dsdt+h_{22}dt^{2}$. In such case, we denote
the coefficients of $ds^{2}$ by $g_{ij}^{\ast }$.


\noindent On the other hand, the function $w$ can be represented in terms of 
$g_i$ and $h_{ij}$ as follows: 
\begin{equation*}
w^2 = g_1^2 h_{22} - 2 g_1 g_2 h_{12} + g_2^2 h_{11}.
\end{equation*}

The Gaussian curvature and the mean curvature of a surface is defined by
means of the coefficients of the second fundamental form $L_{ij}$, which are
the normal components of $\psi _{,i,j}$ $(i,j=1,2)$. That is, 
\begin{equation*}
\psi _{,i,j}=\sum_{k=1}^{2}\Gamma _{ij}^{k}\psi _{,k}+L_{ij}\eta ,
\end{equation*}%
where $\Gamma _{ij}^{k}$ is the Christoffel symbols of the surface and $%
L_{ij}$ are given by 
\begin{equation}
L_{ij}=\frac{1}{g_{1}}\langle g_{1}\psi _{,i,j}-g_{i,j}\psi _{,1},\eta
\rangle =\frac{1}{g_{2}}\langle g_{2}\psi _{,i,j}-g_{i,j}\psi _{,2},\eta
\rangle .  \label{7a}
\end{equation}%
From this, the Gaussian curvature $K$ and the mean curvature $H$ of the
surface are expressed as \cite{ro} 
\begin{equation}
\begin{aligned} K & = \frac {L_{11}L_{22}-L_{12}^2}{ w^2},\\ H &= \frac {
g_2^2 L_{11}- 2 g_1 g_2 L_{12} + g_1^2 L_{22}}{2 w^2} . \end{aligned}
\label{7b}
\end{equation}

\section{ Proof of Theorem 1.1}

In this section, the below definition, proposition and lemma to prove of
Theorem 1.1 are given in different steps.

Now we will start to give the definition of the normal surface as follows:

\begin{definition}
Let $\psi $ be a surface in $\mathbf{G}_{3}$. The normal surface $\varphi $
through $\alpha $ is the surface whose the base curve is $\alpha $ and the
ruling are the straight-lines orthogonal to $\psi $ through $\alpha $.
\end{definition}

The normal surface $\varphi $ along $\alpha $ is a regular surface at least
around $\alpha .$ Then $\varphi $ is parametrized by 
\begin{equation}
\varphi (s,t)=\alpha (s)+tN(s),  \label{8}
\end{equation}%
where $s\in I$ and $t\in R.$ Then we obtain 
\begin{equation}
\varphi _{,s}=\alpha ^{\prime }(s)+t(dN)_{\alpha (s)}(\alpha ^{\prime }(s))
\label{8a}
\end{equation}%
and 
\begin{equation}
\varphi _{,t}=N(s).  \label{8b}
\end{equation}

Considering $s_{0}\in I,$ we get \ $\left\Vert \varphi _{,s}\mathbf{\wedge }%
\varphi _{,t}\right\Vert (s_{0},0)=\left\Vert \alpha ^{\prime }(s_{0})%
\mathbf{\wedge }N(\alpha ^{\prime }(s_{0}))\right\Vert \neq 0.$ Thus, from
the inverse function theorem, $\varphi (s,t)$ is an immersion.

Our first step proving Theorem 1.1 is to show the Gaussian curvature of the
normal surface build up along the geodesic is a negative constant curvature or
zero curvature and should be minimal.

\begin{proposition}
Let $\psi $ be a connected surface in $\mathbf{G}_{3}$. If the normal
surface $\varphi $ constructed along a geodesic of $\psi $ is a surface with
constant Gaussian curvature, then $\varphi $ is either a constant negative
curvature surface or flat surface and $\varphi $ should be a minimal surface.
\end{proposition}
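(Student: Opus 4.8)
The plan is to compute the two fundamental forms of the normal surface $\varphi(s,t)=\alpha(s)+tN(s)$ directly from the geodesic Frenet equations \eqref{3a}, and then read off $K$ and $H$ from \eqref{7b}. First I would differentiate: since $\alpha'=T$ and $N'=\tau B$, equations \eqref{8a}--\eqref{8b} give $\varphi_{,s}=T+t\tau B$ and $\varphi_{,t}=N$. The crucial Galilean observation is that $N$ and $B$ are isotropic (first coordinate zero) while $T$ is non-isotropic with first coordinate $1$, so the first coordinate $x(s,t)$ of $\varphi$ coincides with that of $\alpha$. Hence $g_1=x_{,1}=1$, $g_2=x_{,2}=0$, and all second partials $x_{,i,j}$ vanish. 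Consequently $w^2=g_1^2 h_{22}=h_{22}=\langle N,N\rangle=1$.

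Next I would identify the unit normal $\eta$. The key simplification comes from the cross product \eqref{2}: since the Galilean cross product of two isotropic vectors vanishes, $\varphi_{,s}\wedge\varphi_{,t}=(T+t\tau B)\wedge N=T\wedge N=B$, so $\eta=B$. I would then compute the second derivatives via \eqref{3a}, obtaining $\varphi_{,ss}=(\kappa-t\tau^2)N+t\tau' B$, $\varphi_{,st}=\tau B$, and $\varphi_{,tt}=0$. Because $g_{i,j}=x_{,i,j}=0$ and $g_1=1$, formula \eqref{7a} collapses to $L_{ij}=\langle\varphi_{,i,j},\eta\rangle$, which yields $L_{11}=t\tau'$, $L_{12}=\tau$, and $L_{22}=0$.

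Substituting into \eqref{7b} then gives $K=(L_{11}L_{22}-L_{12}^2)/w^2=-\tau^2$ and $H=(g_2^2 L_{11}-2g_1 g_2 L_{12}+g_1^2 L_{22})/(2w^2)=0$. Thus $\varphi$ is automatically a minimal surface, and its Gaussian curvature $K=-\tau^2\le 0$ is nonpositive with no restriction. Finally, invoking the hypothesis that $K$ is constant forces $\tau^2$ to be constant, so either $\tau\equiv 0$, giving $K=0$ and a flat surface, or $\tau$ is a nonzero constant, giving the negative constant $K=-\tau^2$. This establishes the stated dichotomy together with minimality.

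I expect the only real subtlety to lie in the Galilean bookkeeping rather than in any analytic estimate: one must use correctly that $N$ and $B$ are isotropic, so that $g_2=0$ and $\eta=B$, since these two facts are precisely what force $H\equiv 0$ and reduce $K$ to $-\tau^2$. Once these identifications are in place the remaining computation is entirely routine, and the conclusion follows immediately from constancy of $\tau^2$.
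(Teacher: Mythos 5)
Your computation coincides with the paper's own proof in its main case: the same derivatives $\varphi_{,s}=T+t\tau B$, $\varphi_{,t}=N$, the same identification of the unit normal with $B=T\wedge N$, the same coefficients $L_{11}=t\tau'$, $L_{12}=\tau$, $L_{22}=0$, and the same conclusions $K=-\tau^{2}$ and $H=0$, from which constancy of $K$ gives the stated dichotomy. The one point you do not address is the case in which the geodesic $\alpha$ is a straight line: there $T'\equiv 0$, the Frenet frame and the Frenet torsion are not defined, and the paper runs a separate second case (equations \eqref{13}--\eqref{16}) ending with a polynomial-in-$t$ argument that forces $K_{\varphi}=0$. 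Strictly speaking your proof, which appeals to ``the geodesic Frenet equations \eqref{3a}'', is silent on this case, so as written there is a small gap. It is easily closed, however: if $\tau$ is read throughout as the geodesic torsion $\tau_{g}$ of the Darboux frame \eqref{3}, which is defined for every curve on $\psi$ including straight geodesics, then $N'=\tau_{g}B$ and $B'=-\tau_{g}N$ still hold, the same $L_{ij}$ come out (with $\kappa=k_{n}=0$), and one gets $K=-\tau_{g}^{2}$, $H=0$ uniformly in all cases. This is arguably cleaner than the paper's second case, which in any event collapses to the same computation because $N'\wedge N$ vanishes (both vectors being isotropic, so $w_{\varphi}=1$ and $\eta_{\varphi}=B$ there as well). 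Adding one sentence making the Darboux-frame reading explicit would make your proof complete.
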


\begin{proof}
Firstly, suppose that $\alpha $ is not a straightline. Then its curvature is
defined as well as $T^{\prime }(s)\neq 0.$ Considering equations 
\eqref{8a}
and 
\eqref{8b},%
\ 

\begin{equation}
\varphi _{,s}\mathbf{\wedge }\varphi _{,t}=(0,0,1),  \label{9c}
\end{equation}
then we have 
\begin{equation}
\left\Vert \varphi _{,s}\mathbf{\wedge }\varphi ,_{t}\right\Vert =1.
\label{9d}
\end{equation}%
Using 
\eqref{4}%
, the unit isotropic normal vector $\eta _{\varphi }$ of $\varphi (s,t)$ is
found as%
\begin{equation}
\eta _{\varphi }=(0,0,1).  \label{10}
\end{equation}%
On the other hand, from equations 
\eqref{10}
and 
\eqref{1a}%
, it is easy to show that 
\begin{equation*}
\delta _{\varphi }=(0,-1,0).
\end{equation*}%
Since $\eta _{\varphi }$ is the isotropic vector, using the Galilean frame,
we can obtain \ $g_{\varphi 1}=1,$ $g_{\varphi 2}=0.$

Considering the projection of $\varphi _{,s}$ and $\varphi _{,t}$ onto the
Euclidean $yz-$plane, we obtain 
\begin{equation}
h_{\varphi 22}=1.  \label{10b}
\end{equation}%
Using 
\eqref{8b}%
, the coefficients of the first fundamental form of the surface in Galilean
space are obtained as%
\begin{equation*}
g_{\varphi 11}^{\ast }=1,\quad g_{\varphi 12}^{\ast }=0,\quad g_{\varphi
22}^{\ast }=1.
\end{equation*}

\noindent To calculate the second fundamental form of $\varphi (s,t)$, we
have to calculate the following: 
\begin{eqnarray}
\varphi _{,s,s} &=&\left( \kappa -t\tau ^{2}\right) N+t\tau ^{\prime }B
\label{10c} \\
\varphi _{,t,s} &=&\tau B,  \notag \\
\varphi _{,t,t} &=&0.  \notag
\end{eqnarray}%
From equations 
\eqref{10c}
and 
\eqref{7a}%
, the coefficients of the second fundamental form are found as%
\begin{equation}
L_{\varphi 11}=t\tau ^{\prime },\text{ }L_{\varphi 12}=\tau ,\text{ }%
L_{\varphi 22}=0.  \label{10d}
\end{equation}%
Thus, $K_{\varphi }$ and $H_{\varphi }$ are calculated as 
\begin{equation}
K_{\varphi }=-\tau ^{2}<0,  \label{11}
\end{equation}%
\begin{equation}
H_{\varphi }=0.  \label{12}
\end{equation}%
Secondly, assume that $\alpha $ is a straightline. Then the similar
calculations as first case can be done as follows:

\begin{eqnarray}
\varphi _{,s} &=&T(s)+tN^{\prime }(s),\text{ }\varphi _{,t}=N(s),  \label{13}
\\
\varphi _{,s,s} &=&tN^{\prime \prime }(s),\text{ }\varphi _{,t,s}=N^{\prime
}(s)\text{ and }\varphi _{,t,t}=0,  \notag
\end{eqnarray}%
where $\alpha ^{\prime }(s)=T(s)$ is the tangent vector to $\alpha .$

Using 
\eqref{4}%
, the unit isotropic normal vector $\eta _{\varphi }$ of $\varphi (s,t)$ is
obtained as%
\begin{equation}
\eta _{\varphi }=\frac{T(s)\mathbf{\wedge }N(s)+tN^{\prime }(s)\mathbf{%
\wedge }N(s)}{w_{\varphi }},  \label{14}
\end{equation}%
where $w_{\varphi }=\left\Vert T(s)\mathbf{\wedge }N(s)+tN^{\prime }(s)%
\mathbf{\wedge }N(s)\right\Vert .$

From equations 
\eqref{10c}
and 
\eqref{7a}%
, the coefficients of the second fundamental form are given as

\begin{eqnarray}
L_{\varphi 11} &=&\frac{1}{w_{\varphi }}\langle tN^{\prime \prime }(s),T(s)%
\mathbf{\wedge }N(s)+tN^{\prime }(s)\mathbf{\wedge }N(s)\rangle  \label{15}
\\
L_{\varphi 11} &=&\frac{t}{w_{\varphi }}\langle N^{\prime \prime }(s),T(s)%
\mathbf{\wedge }N(s)\rangle +\frac{t^{2}}{w}\langle N^{\prime \prime
}(s),N^{\prime }(s)\mathbf{\wedge }N(s)\rangle ,  \notag \\
L_{\varphi 12} &=&\frac{1}{w_{\varphi }}\langle N^{\prime }(s),T(s)\mathbf{%
\wedge }N(s)+tN^{\prime }(s)\mathbf{\wedge }N(s)\rangle  \notag \\
L_{\varphi 12} &=&\frac{1}{w_{\varphi }}\langle N^{\prime }(s),T(s)\mathbf{%
\wedge }N(s)\rangle ,  \notag \\
L_{\varphi 22} &=&0.  \notag
\end{eqnarray}%
Thus, the Gaussian curvature $K_{\varphi }$ satisfies 
\begin{equation}
K_{\varphi }w_{\varphi }^{\frac{3}{2}}=\langle N^{\prime }(s),T(s)\mathbf{%
\wedge }N(s)\rangle ^{2}.  \label{16}
\end{equation}%
Squaring both sides equation 
\eqref{16}
and writing as polynomial equation, we get a polynomial on $t$ of degree
six, this means that%
\begin{equation*}
\sum_{n=1}^{6}P_{n}(s)t^{n}=0.
\end{equation*}%
Particularly, $P_{n}(s)=0$ for $0\leq n\leq 6.$ Then we can obtain $\
P_{0}(s)=K_{\varphi }^{2}=0,$ which implies that $K_{\varphi }=0.$

Furthermore we can easily show that 
\begin{equation*}
H_{\varphi }=0,
\end{equation*}%
which is completed the proof.
\end{proof}

\begin{lemma}
Let $\psi $ be a connected surface in $\mathbf{G}_{3}$. If the normal
surface $\varphi $, which is a minimal surface, constructed along a geodesic 
$\alpha $ is a constant negative curvature surface or a flat surface, then $%
\alpha $ is either

$i)$ an anti Salkowski curve,

$ii)$ a planar curve,

$iii)$ or a line segment and the last case $\alpha $ is a line of curvature
of $\psi .$
\end{lemma}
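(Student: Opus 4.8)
The strategy is to convert the hypothesis on $\varphi$ into conditions on the invariants $\kappa$ and $\tau$ of the geodesic $\alpha$, using the explicit curvature expressions established in the Proposition, and then to read off the classification case by case. Recall first that the Proposition already shows $\varphi$ to be minimal in every case, so the hypothesis ``minimal surface with constant negative curvature or zero curvature'' reduces to the single genuine requirement that $K_{\varphi}$ be a constant. I would therefore split the argument according to whether or not $\alpha$ is a straight line, since these are exactly the two regimes distinguished in the proof of the Proposition.

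Suppose first that $\alpha$ is not a straight line. By the Proposition we have $K_{\varphi}=-\tau^{2}$, so the requirement that $K_{\varphi}$ be constant is equivalent to $\tau^{2}$ being constant along $\alpha$. Differentiating gives $\tau\tau^{\prime}=0$ at every point, so pointwise either $\tau=0$ or $\tau^{\prime}=0$; using continuity of $\tau$ and connectedness of $\alpha$ I would promote this to a global alternative. If $\tau\equiv 0$ then $\alpha$ is a planar curve, which is case $ii)$. If instead $\tau$ is a nonzero constant, then $\alpha$ has constant (nonzero) torsion, which is precisely the defining property of an anti-Salkowski curve, giving case $i)$; here the curvature $\kappa$ is left free, the borderline situation $\kappa$ also constant being the circular-helix degeneration of the same family.

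Suppose now that $\alpha$ is a straight line, so $\kappa=0$ and $\alpha$ is a line segment, which is case $iii)$. It remains to prove that $\alpha$ is a line of curvature of $\psi$, i.e.\ that its geodesic torsion vanishes. For this I would return to the second fundamental form of $\varphi$ computed in the straight-line branch of the Proposition, where $L_{\varphi 22}=0$ and $L_{\varphi 12}=\frac{1}{w_{\varphi}}\langle N^{\prime},T\wedge N\rangle$, so that $K_{\varphi}$ equals, up to sign, $\langle N^{\prime},T\wedge N\rangle^{2}$ divided by a positive power of $w_{\varphi}$ (this is the content of the relation squared in the Proposition). Since $w_{\varphi}$ depends nontrivially on the ruling parameter $t$ while $\langle N^{\prime},T\wedge N\rangle$ does not, the constancy of $K_{\varphi}$ forces $\langle N^{\prime},T\wedge N\rangle=0$, whence in fact $K_{\varphi}=0$, recovering flatness. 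Reading this through the Galilean Darboux frame, where $N^{\prime}=\tau_{g}B$ and $B=T\wedge N$, yields $\tau_{g}=0$, which is exactly the condition that $\alpha$ be a line of curvature.

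The main obstacle I anticipate is the straight-line case: one must be careful with the Galilean scalar product, distinguishing isotropic from non-isotropic vectors, when evaluating $\langle N^{\prime},T\wedge N\rangle$, and one must justify that $w_{\varphi}$ genuinely varies with $t$ so that constancy of $K_{\varphi}$ can be pushed onto the numerator. In the non-straight case the only subtlety is the continuity and connectedness step needed to pass from the pointwise identity $\tau\tau^{\prime}=0$ to a clean global dichotomy, together with clarifying whether the constant-curvature branch should be named anti-Salkowski or its helix specialization.
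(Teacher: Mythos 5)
Your proposal is correct and follows essentially the same route as the paper: split on whether $\alpha$ is a straight line, read the first two cases off $K_{\varphi}=-\tau^{2}$, and derive the line-of-curvature conclusion from $\langle N^{\prime},T\wedge N\rangle=0$ in the straight-line case. The only difference is cosmetic: where the paper deduces $N^{\prime}=\lambda T$ by a linear-dependence argument from $\langle T,N\rangle=\langle N^{\prime},N\rangle=0$, you read $\tau_{g}=0$ directly from the Darboux identity $N^{\prime}=\tau_{g}B$ with $B=T\wedge N$, which is an equally valid (and slightly more direct) way to reach the same conclusion.
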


\begin{proof}
Let $\alpha $ be not a line segment. Then $\kappa >0$. Since $\varphi $ is a
surface with a negative constant curvature, from equation 
\eqref{11}%
, we have 
\begin{equation*}
\tau (s)=const.,
\end{equation*}%
for all $s$, as a result, we have constant torsion but non-constant
curvature. This means that $\alpha $ is an anti Salkowski curve \textrm{\cite%
{sal}}. Or, we have 
\begin{equation*}
\tau (s)=0,
\end{equation*}%
this implies $\alpha $ is a planar curve.

In the last case, if $\alpha $ is a line segment, from equation 
\eqref{16}%
, we get 
\begin{equation}
\langle N^{\prime }(s),T(s)\mathbf{\wedge }N(s)\rangle =0.  \label{17}
\end{equation}%
Taking account of the above equations, we can write 
\begin{equation*}
T(s)=a(s)N(s)+b(s)N^{\prime }(s).
\end{equation*}%
Since $\left\langle T(s),N(s)\right\rangle =\left\langle N^{\prime
}(s),N(s)\right\rangle =0,$ we find 
\begin{equation*}
T(s)=b(s)N^{\prime }(s),
\end{equation*}%
this means that 
\begin{equation*}
N^{\prime }(s)=\lambda T(s),
\end{equation*}%
from this $\alpha $ is also a line of curvature of the surface.
\end{proof}

\bigskip

Therefore, Lemma 3.3 means that, under the same hypothesis of Theorem 1.1,
there exist four geodesics through each point $p\in \psi $ which are the
next three types:

$i)$ An anti Salkowski curve (Type 1),

$ii)$ A planar curve (Type 2),

$iii)$ A line segment, which is a line of curvature (Type 3)$.$

\begin{theorem}
(\cite{tam2}) A connected surface in $R^{3}$ with the property that there
exist two proper helical geodesics through each point of the surface is an
open of a right circular cylinder.
\end{theorem}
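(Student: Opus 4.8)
The plan is to convert the hypothesis into a pointwise condition on the covariant derivative of the shape operator and then invoke the classification of surfaces in $R^{3}$ with parallel second fundamental form. Write $S$ for the Weingarten map, $\mathrm{II}(X,Y)=\langle SX,Y\rangle$, and recall that for a unit-speed geodesic $\gamma$ with tangent $T$ one has $\nabla_{T}T=0$, while the Darboux equations (with $k_{g}=0$) show that the intrinsic normal $g=n\times T$ also satisfies $\nabla_{T}g=0$. Since $\gamma$ is a geodesic its Frenet normal is the surface normal, so its curvature and torsion agree (up to sign) with the normal curvature $k_{n}=\langle ST,T\rangle$ and the geodesic torsion $\tau_{g}=\langle ST,g\rangle$. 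A \emph{proper} helix has both $\kappa$ and $\tau$ constant and nonzero, so along each such geodesic both $k_{n}$ and $\tau_{g}$ are constant.

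First I would differentiate these two scalars along $\gamma$. Setting $C(X,Y,Z)=\langle(\nabla_{X}S)Y,Z\rangle$ and using $\nabla_{T}T=\nabla_{T}g=0$, one obtains $\tfrac{d}{dt}k_{n}=C(T,T,T)$ and $\tfrac{d}{dt}\tau_{g}=C(T,T,g)$. By the Codazzi equation in $R^{3}$ the tensor $C$ is totally symmetric on the tangent plane. Hence each helical-geodesic direction $T$ at a point $p$ forces $C(T,T,T)=0$ and $C(T,T,g)=0$; since $\{T,g\}$ is a basis this says $C(T,T,\cdot)\equiv 0$ as a linear form on $T_{p}\psi$.

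The key algebraic step is to exploit the existence of \emph{two distinct} such directions at each point. Consider the binary cubic $f(v)=C(v,v,v)$ on $T_{p}\psi$. Euler's identity together with $C(T,T,\cdot)=0$ gives $f(T)=0$ and $\nabla f(T)=3\,C(T,T,\cdot)=0$, so every helical-geodesic direction is a root of $f$ of multiplicity at least two. Two distinct directions would then account for multiplicity at least four in a cubic, which is impossible unless $f\equiv 0$; by polarization a symmetric tensor with vanishing cubic form is itself zero, whence $C=0$ at $p$. As $p$ is arbitrary, $\nabla S\equiv 0$: the surface has parallel second fundamental form, equivalently constant principal curvatures, so it is isoparametric.

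It remains to classify and eliminate. The isoparametric surfaces in $R^{3}$ are open pieces of planes, spheres, or right circular cylinders. A plane has only straight-line geodesics ($\kappa=0$) and a sphere only great-circle geodesics ($\tau=0$), so neither carries a proper helical geodesic; this is precisely where the word \emph{proper} is indispensable. The round cylinder, by contrast, carries a whole family of proper helical geodesics through each point, so it is the only survivor and $\psi$ is an open part of a right circular cylinder. I expect the main obstacle to be the third paragraph: one must be sure the two helical directions are genuinely distinct (so that the multiplicity count really exceeds three) and must handle the degenerate configurations where the two directions are symmetric about, or equal to, the principal directions; the cubic-root argument is designed to absorb all these cases uniformly, but verifying that it does is the delicate point, together with citing the classification of parallel-$\mathrm{II}$ surfaces correctly.
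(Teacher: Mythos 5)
Your argument is correct, but there is nothing in the paper to compare it against: Theorem 3.4 is quoted verbatim from Tamura \cite{tam2} and used as a black box in the Claim that follows, so the paper contains no proof of this statement. What you supply is a self-contained tensorial proof, and its steps check out. The two analytic inputs are both standard and correctly deployed: along a geodesic the Frenet curvature and torsion coincide (up to sign) with $k_{n}=\langle ST,T\rangle$ and $\tau_{g}=\langle ST,g\rangle$, the frame vectors $T$ and $g=n\times T$ are parallel along the geodesic, and the Codazzi equation makes $C(X,Y,Z)=\langle (\nabla_{X}S)Y,Z\rangle$ totally symmetric. The algebraic core is also sound, and can in fact be made even more transparent than your multiplicity count: taking the two helical directions $T_{1},T_{2}$ as a basis of $T_{p}\psi$, the conditions $C(T_{i},T_{i},\cdot)=0$ directly annihilate all four independent components $C(T_{1},T_{1},T_{1})$, $C(T_{1},T_{1},T_{2})$, $C(T_{2},T_{2},T_{1})$, $C(T_{2},T_{2},T_{2})$ of a totally symmetric cubic in two variables, so $C=0$ with no discussion of degenerate root configurations needed. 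The one point you should state explicitly is that two \emph{distinct} geodesics through $p$ automatically have linearly independent tangents there, by uniqueness of geodesics with prescribed initial point and direction; that is exactly what the basis (or multiplicity) step requires. The concluding elimination is also right, and you correctly identify that the hypothesis \emph{proper} is what rules out the plane ($\kappa=0$ on line geodesics) and the sphere ($\tau=0$ on great circles), leaving only the right circular cylinder among the connected surfaces with $\nabla S\equiv 0$. Your route through the vanishing of $\nabla S$ is more structural than the classical computations with the Euler and Bonnet relations for $k_{n}$ and $\tau_{g}$ in terms of principal curvatures, at the modest cost of invoking the local classification of isoparametric surfaces in $R^{3}$ at the end.
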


Now our aim is to give the proof of Theorem 1.1. Considering Theorem 3.4 and
Proposition 3.2, we can give the following claim:

\begin{claim}
Let $p\in \psi $ a non-umbilic point. In a neighborhood $A_{\varphi }$ of $p$%
, there are two proper helical geodesics which is a curve that is both a
proper circular helix and a geodesic on $\psi ,$ through any point of $%
A_{\varphi }$.
\end{claim}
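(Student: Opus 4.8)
The plan is to refine the three--type classification of Lemma 3.3 at the non--umbilic point $p$ so that the hypotheses of Theorem 3.4 become available, namely two proper circular--helix geodesics through each nearby point. Since $p$ is non--umbilic, the principal curvatures $k_{1},k_{2}$ of $\psi$ satisfy $k_{1}\neq k_{2}$ at $p$, and by continuity $k_{1}\neq k_{2}$ persists on a neighborhood $A_{\varphi}$ of $p$. A proper helical geodesic requires both $\kappa\neq 0$ and $\tau\neq 0$ to be constant along the curve, so neither the planar geodesics (Type 2, where $\tau=0$) nor the line segments (Type 3, where $\kappa=0$) qualify; the only candidates are the constant--torsion geodesics of Type 1.

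First I would show that at least two of the four geodesics through each point of $A_{\varphi}$ are of Type 1. For a geodesic on $\psi$ one has $\kappa=k_{n}$ and $\tau=\tau_{g}$, and writing $\theta$ for the angle between the tangent direction and a principal direction, the Galilean analogues of Euler's formulas read
\begin{equation*}
k_{n}(\theta)=k_{1}\cos^{2}\theta+k_{2}\sin^{2}\theta,\qquad
\tau_{g}(\theta)=\tfrac{1}{2}(k_{2}-k_{1})\sin 2\theta.
\end{equation*}
Because $k_{1}\neq k_{2}$ on $A_{\varphi}$, the vanishing $\tau_{g}=0$ forces $\sin 2\theta=0$, so Type 2 can occur only along the two principal directions; likewise $k_{n}=0$ pins $\theta$ to the (at most two) asymptotic directions. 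Hence among the four prescribed directions at most two can be degenerate, and at least two satisfy $\kappa\neq 0$, $\tau\neq 0$; by Lemma 3.3 these are of Type 1, and by equation \eqref{11} each has $\tau=\tau_{g}=\text{const}$ along its trajectory.

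The hard part will be upgrading these two Type 1 geodesics from anti--Salkowski curves to proper circular helices, i.e.\ proving that $\kappa=k_{n}$ is also constant along each of them. Differentiating $\tau_{g}(\theta)=\tfrac{1}{2}(k_{2}-k_{1})\sin 2\theta$ along a geodesic and using that $d\theta/ds$ is governed by the geodesic equation, the condition $\tau_{g}=\text{const}$ couples the rates of change of $k_{2}-k_{1}$ and of $\theta$. Imposing this constancy simultaneously for two \emph{distinct} Type 1 directions at each point of $A_{\varphi}$ over--determines the system: I expect it to force $k_{2}-k_{1}$ to be constant and $\cos 2\theta$ to be constant along each geodesic, whence $k_{n}=\tfrac{1}{2}(k_{1}+k_{2})+\tfrac{1}{2}(k_{1}-k_{2})\cos 2\theta$ is constant and therefore $\kappa$ is constant. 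This is exactly the step that eliminates the anti--Salkowski alternative left open by Lemma 3.3. Disentangling the coupled evolution of $k_{1},k_{2},\theta$ and showing the two--direction requirement is rigid enough to yield constancy of $\kappa$ (rather than mere constancy of $\tau$) is where the real difficulty lies.

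With two proper helical geodesics now passing through every point of $A_{\varphi}$, the hypotheses of Theorem 3.4 are satisfied, which is precisely the conclusion asserted by the Claim.
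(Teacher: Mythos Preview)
Your counting argument---using Euler's formulas to see that at a non-umbilic point the planar directions ($\tau_g=0$) and the straight-line directions ($k_n=0$) each account for at most two of the four prescribed geodesics, hence at least two must be of Type~1---is exactly the paper's argument, only spelled out in more detail. The paper simply says ``Because the point is not umbilic there are at most two geodesics of Type 2 or Type 3,'' without writing down the Euler relations.

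Where you diverge is in the passage from ``anti-Salkowski'' to ``circular helix.'' You correctly identify this as the hard step and sketch an over-determination argument: differentiating $\tau_g(\theta)=\tfrac12(k_2-k_1)\sin 2\theta$ along two distinct geodesics to force $k_2-k_1$ and $\cos 2\theta$ constant, hence $\kappa=k_n$ constant. The paper, by contrast, dispatches this step in a single sentence: ``In particular, If we get $\kappa=\text{const.}$, then the anti salkowski curve turns out to be a circular helix. This proves the claim.'' No mechanism is offered for \emph{why} $\kappa$ should be constant; it is simply asserted. So your proposal is not so much a different route as an attempt to supply an argument where the paper gives none. Your sketch is plausible in spirit but, as you yourself note, the coupling between $\theta'$, $k_1'$, $k_2'$ along the geodesic is not actually resolved---and the paper will not help you close it, because it does not close it either.
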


By Lemma 3.3, if $A_{\varphi }\subset $ $\psi $ is an open set around $p$
formed by non-umbilic points, then there exist four tangent directions at $%
q\in A_{\varphi }$ such that the corresponding geodesic refers to one of the
above three Types 1, 2, 3. Because the point is not umbilic there are at
most two geodesics of Type 2 or Type 3. As there are four geodesics of Types
either 1, 2, or 3, we have two geodesics which are an anti salkowski curve,
i.e. of Type 1. In particular, If we get $\kappa =const.,$ then the anti
salkowski curve turns out to be a circular helix. This proves the claim.

Let us denote $\psi _{1}$ is the set of umbilic points of $\psi $. This set
is closed on $\psi .$

$i)$ If $\psi -\psi _{1}\neq \emptyset $, then $\psi -\psi _{1}$ is
contained in circular hyperboloid . In particular, we can write $K_{\varphi
}=-\dfrac{1}{r^{2}}<0$ and $H_{\varphi }=0$ on $\psi -\psi _{1}$. \ Thus we
can define closed set in $\psi $ such that $\psi _{2}=\left\{ p\in \psi
:K_{\varphi }(p)=-\dfrac{1}{r^{2}}<0,\text{ }H_{\varphi }(p)=0\right\} $.
From connectedness, we get proved that $\psi -\psi _{1}\subset $ $\psi _{2}$%
. Since $\psi _{2}\cap $ $\psi _{1}=\emptyset $, \ we easily say that $\psi
_{2}\subset \psi -\psi _{1}$ , which proves that $\psi _{2}=\psi -\psi _{1}$%
. As $\psi _{2}$ is both an open and closed set of $\psi $, $\psi _{2}$ $=$ $%
\psi $ by connectedness, proving that $\psi $ is an open set of a circular
hyperboloid.

$ii)$ If $\psi -\psi _{1}=\emptyset $, then $\psi $ is an umbilic surface.
Then $\psi $ is an open of a plane since we have a flat and a minimal
surface.

Then we finish the proof of Theorem 1.1.

\section{ Proof of Theorem 1.2}

In this section, we give the proof of Theorem 1.2 in different steps as the
proof of Theorem 1.1 .

Now, our starting point is to give the definition of the binormal surface as
follows:

\begin{definition}
Let $\psi $ be a surface in $\mathbf{G}_{3}$. The binormal surface $\phi $
through $\alpha $ is the surface whose the base curve is $\alpha $ and the
ruling are the straightlines orthogonal to $\phi $ through $\alpha $.
\end{definition}

The binormal surface $\phi $ along $\alpha $ is a regular surface at least
around $\alpha .$ Then $\phi $ is specifed by 
\begin{equation}
\phi (s,t)=\alpha (s)+tB(s),  \label{18}
\end{equation}%
where $s\in I$ and $t\in R.$ Then we get%
\begin{equation}
\phi _{,s}=\alpha ^{\prime }(s)+t(dB)_{\alpha (s)}(\alpha ^{\prime }(s))
\label{18a}
\end{equation}%
and 
\begin{equation}
\phi _{,t}=B(s).  \label{18b}
\end{equation}

If we consider $s_{0}\in I,$ then we get \ $\left\Vert \phi _{,s}\mathbf{%
\wedge }\phi _{,t}\right\Vert (s_{0},0)=\left\Vert \alpha ^{\prime }(s_{0})%
\mathbf{\wedge }B(\alpha ^{\prime }(s_{0}))\right\Vert \neq 0.$ Thus, from
the inverse function theorem, $\phi (s,t)$ is an immersion.

Our first step is to prove Theorem 1.2 is to get the Gaussian curvature of
the binormal surface constructed along a geodesic is a negative constant
curvature or zero curvature and should be minimal.

\begin{proposition}
Let $\psi $ be a connected surface in $\mathbf{G}_{3}$. If the binormal
surface $\phi $ constructed along a geodesic of $\psi $ is a surface with
constant Gaussian curvature, then $\phi $ is either a constant negative
curvature surface or flat surface and $\phi $ should be a minimal surface.
\end{proposition}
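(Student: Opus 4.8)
\noindent The plan is to mirror the two-case argument of Proposition 3.2, replacing the normal ruling $N$ by the binormal ruling $B$ throughout and using the geodesic Darboux equation $B^{\prime}(s)=-\tau(s)N(s)$ from \eqref{3a} in place of $N^{\prime}(s)=\tau(s)B(s)$. The sign change $+\tau\mapsto-\tau$ propagates through the whole computation, but since the Gaussian curvature will come out as a perfect square $-\tau^{2}$, it will not affect the final conclusion.

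First I would treat the case where $\alpha$ is not a line segment, so that $\kappa>0$. Differentiating \eqref{18} and inserting \eqref{3a} gives
\begin{equation*}
\phi_{,s}=T(s)-t\tau(s)N(s),\qquad \phi_{,t}=B(s).
\end{equation*}
Exactly as in \eqref{9c}--\eqref{10}, the Galilean cross product $\phi_{,s}\wedge\phi_{,t}$ is a unit isotropic vector, so that $w_{\phi}=1$ and, in the adapted Galilean frame, $\eta_{\phi}=(0,0,1)$ with $g_{\phi 1}=1$ and $g_{\phi 2}=0$. Projecting $\phi_{,s},\phi_{,t}$ onto the $yz$-plane then yields $g_{\phi 11}^{\ast}=g_{\phi 22}^{\ast}=1$ and $g_{\phi 12}^{\ast}=0$.

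Next I would differentiate once more, again using \eqref{3a}, to obtain
\begin{equation*}
\phi_{,s,s}=(\kappa-t\tau^{\prime})N-t\tau^{2}B,\qquad \phi_{,t,s}=-\tau N,\qquad \phi_{,t,t}=0.
\end{equation*}
Feeding these into \eqref{7a} (the correction term drops out because the first coordinate of $\phi$ reduces to the arclength $s$, whose second derivatives all vanish) gives $L_{\phi 11}=-(\kappa-t\tau^{\prime})$, $L_{\phi 12}=\tau$, $L_{\phi 22}=0$, and then \eqref{7b} yields $K_{\phi}=-\tau^{2}\le 0$ and $H_{\phi}=0$; combined with the hypothesis that $K_{\phi}$ is constant, $\phi$ is a minimal surface of constant negative or zero curvature. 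The one place that needs genuine care---and the step I expect to be the main obstacle---is precisely here: unlike the normal-surface computation, the coefficient $L_{\phi 11}$ now carries an honest $\kappa$-term coming from $T^{\prime}=\kappa N$, and one must check that it is annihilated in both formulas of \eqref{7b}. This happens because $L_{\phi 22}=0$ and $g_{\phi 2}=0$, so every term involving $L_{\phi 11}$ is killed; verifying this cancellation is what makes the binormal conclusion coincide with the normal one.

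Finally I would dispose of the case where $\alpha$ is a line segment, mirroring \eqref{13}--\eqref{16}. Here $T$ is constant, and I write $\phi_{,s}=T+tB^{\prime}$, $\phi_{,s,s}=tB^{\prime\prime}$, $\phi_{,t,s}=B^{\prime}$, $\phi_{,t,t}=0$; computing $\eta_{\phi}$ and $w_{\phi}$ from \eqref{4} leads to the analogue of \eqref{16}, an identity of the form $K_{\phi}\,w_{\phi}^{3/2}=\langle B^{\prime},T\wedge B\rangle^{2}$. Squaring both sides and clearing $w_{\phi}$ turns this into a polynomial identity $\sum_{n=0}^{6}P_{n}(s)t^{n}=0$, all of whose coefficients must vanish; the constant term gives $P_{0}(s)=K_{\phi}^{2}=0$, hence $K_{\phi}=0$, and a direct check gives $H_{\phi}=0$. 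This settles both cases and proves the proposition.
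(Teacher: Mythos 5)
Your proposal follows the paper's own proof essentially step for step: the same two-case split (non-straight curve versus line segment), the same Darboux-frame computation yielding $L_{\phi 11}=-\kappa+t\tau^{\prime}$, $L_{\phi 12}=\tau$, $L_{\phi 22}=0$ and hence $K_{\phi}=-\tau^{2}$, $H_{\phi}=0$, and the same degree-six polynomial argument forcing $K_{\phi}=0$ in the line-segment case. The only divergences are cosmetic (your isotropic normal is $(0,0,1)$ where the paper computes $(0,-1,0)$), and your explicit observation that the $\kappa$-term in $L_{\phi 11}$ is annihilated in $H_{\phi}$ because $g_{\phi 2}=0$ is a correct detail the paper leaves implicit.
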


\begin{proof}
Assume first that $\alpha $ is not a straightline. Then its curvature is
defined. If we consider equations 
\eqref{18a}
and 
\eqref{18b}%
,

\begin{equation}
\phi _{,s}\mathbf{\wedge }\phi _{,t}=(0,-1,0),  \label{19c}
\end{equation}%
then we have
\begin{equation}
\left\Vert \phi _{,s}\mathbf{\wedge }\phi ,_{t}\right\Vert =1.  \label{19d}
\end{equation}%
From 
\eqref{4}%
, the unit isotropic normal vector $\eta _{\phi }$ of $\phi (s,t)$ is found
as%
\begin{equation}
\eta _{\phi }=(0,-1,0).  \label{20}
\end{equation}%
On the other hand, from equations 
\eqref{20}
and 
\eqref{1a}%
, it is easy to calculate that 
\begin{equation*}
\delta _{\phi }=(0,0,-1).
\end{equation*}%
Since $\eta _{\phi }$ is the isotropic vector, using the Galilean frame, we
can get \ $g_{\phi 1}=1,$ $g_{\phi 2}=0.$

Considering the projection of $\phi _{,s}$ and $\phi _{,t}$ onto the
Euclidean $yz-$plane, we get 
\begin{equation}
h_{\phi 22}=1.  \label{20b}
\end{equation}%
The coefficients of the first fundamental form of the surface in Galilean
space are found as%
\begin{equation*}
g_{\phi 11}^{\ast }=1,\quad g_{\phi 12}^{\ast }=0,\quad g_{\phi 22}^{\ast
}=1.
\end{equation*}

\noindent To calculate the second fundamental form of $\phi (s,t)$, we have
to compute the following: 
\begin{eqnarray}
\phi _{,s,s} &=&\left( \kappa -t\tau ^{\prime }\right) N-t\tau ^{2}B
\label{20c} \\
\phi _{,t,s} &=&-\tau N,  \notag \\
\phi _{,t,t} &=&0.  \notag
\end{eqnarray}%
From equations 
\eqref{20c}
and 
\eqref{7a}%
, the coefficients of the second fundamental form are found as%
\begin{equation}
L_{\phi 11}=-\kappa +t\tau ^{\prime },\text{ }L_{\phi 12}=\tau ,\text{ }%
L_{\phi 22}=0.  \label{20d}
\end{equation}%
Thus, $K_{\phi }$ and $H_{\phi }$ are computed as 
\begin{equation}
K_{\phi }=-\tau ^{2}<0,  \label{21}
\end{equation}%
\begin{equation}
H_{\phi }=0.  \label{22}
\end{equation}%
Secondly, suppose that $\alpha $ is a straightline. Then the similar
calculations as first case can be done as follows:

\begin{eqnarray}
\phi _{,s} &=&T(s)+tB^{\prime }(s),\text{ }\phi _{,t}=B(s),  \label{23} \\
\phi _{,s,s} &=&tB^{\prime \prime }(s),\text{ }\phi _{,t,s}=B^{\prime }(s)%
\text{ and }\phi _{,t,t}=0,  \notag
\end{eqnarray}%
where $\alpha ^{\prime }(s)=T(s)$ is the tangent vector to $\alpha .$

Using 
\eqref{4}%
, the unit isotropic normal vector $\eta _{\phi }$ of $\phi (s,t)$ is
obtained as%
\begin{equation}
\eta _{\phi }=\frac{T(s)\mathbf{\wedge }B(s)+tB^{\prime }(s)\mathbf{\wedge }%
B(s)}{w_{\phi }},  \label{24}
\end{equation}%
where $w_{\phi }=\left\Vert T(s)\mathbf{\wedge }B(s)+tB^{\prime }(s)\mathbf{%
\wedge }B(s)\right\Vert .$

From equations 
\eqref{20c}
and 
\eqref{7a}%
, the coefficients of the second fundamental form are given as

\begin{eqnarray}
L_{\phi 11} &=&\frac{1}{w_{\phi }}\langle tB^{\prime \prime }(s),T(s)\mathbf{%
\wedge }B(s)+tB^{\prime }(s)\mathbf{\wedge }B(s)\rangle  \label{25} \\
L_{\phi 11} &=&\frac{t}{w_{\phi }}\langle B^{\prime \prime }(s),T(s)\mathbf{%
\wedge }B(s)\rangle +\frac{t^{2}}{w}\langle B^{\prime \prime }(s),B^{\prime
}(s)\mathbf{\wedge }B(s)\rangle ,  \notag \\
L_{\phi 12} &=&\frac{1}{w_{\phi }}\langle B^{\prime }(s),T(s)\mathbf{\wedge }%
B(s)+tB^{\prime }(s)\mathbf{\wedge }B(s)\rangle  \notag \\
L_{\phi 12} &=&\frac{1}{w_{\phi }}\langle B^{\prime }(s),T(s)\mathbf{\wedge }%
B(s)\rangle ,  \notag \\
L_{\phi 22} &=&0.  \notag
\end{eqnarray}%
Thus, the Gaussian curvature $K_{\phi }$ satisfies 
\begin{equation}
K_{\phi }w_{\phi }^{\frac{3}{2}}=\langle B^{\prime }(s),T(s)\mathbf{\wedge }%
B(s)\rangle ^{2}.  \label{26}
\end{equation}%
Squaring both sides equation 
\eqref{26}
and writing as polynomial equation, we obtain a polynomial on $t$ of degree
six, this means that%
\begin{equation*}
\sum_{n=1}^{6}Q_{n}(s)t^{n}=0.
\end{equation*}%
Particularly, $Q_{n}(s)=0$ for $0\leq n\leq 6.$ Then we can obtain $\
Q_{0}(s)=K_{\phi }^{2}=0,$ which implies that $K_{\phi }=0.$

Moreover we can easily show that 
\begin{equation*}
H_{\phi }=0.
\end{equation*}%
Hence this completes the proof.
\end{proof}

\begin{lemma}
Let $\psi $ be a connected surface in $G_{3}$. If the binormal surface $\phi 
$, which is a minimal surface, constructed along a geodesic $\alpha $ is a
constant negative curvature surface or a flat surface, then $\alpha $ is
either

$i)$ an anti Salkowski curve,

$ii)$ a planar curve,

$iii)$ or a line segment$.$
\end{lemma}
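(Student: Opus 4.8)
The plan is to mirror the proof of Lemma 3.3, substituting the binormal data computed in Proposition 4.2 for the normal data. I would begin by splitting into the two regimes already distinguished in Proposition 4.2, according to whether the base geodesic $\alpha$ is a line segment.

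First, suppose $\alpha$ is not a line segment, so that $\kappa>0$ and the Galilean Darboux equations \eqref{3a} are defined along $\alpha$. By Proposition 4.2 the binormal surface satisfies $K_{\phi}=-\tau^{2}$, cf. \eqref{21}, and is automatically minimal, $H_{\phi}=0$, cf. \eqref{22}; in particular the minimality hypothesis of the lemma is consistent and carries no extra content. Imposing that $K_{\phi}$ be constant then forces $\tau^{2}$, and hence $\tau$, to be constant along $\alpha$. From here I would branch on the value of this constant. If $\tau\equiv c\neq 0$, then $\alpha$ has constant torsion but non-constant curvature, so $\alpha$ is an anti Salkowski curve \cite{sal}, yielding case $i)$; in the degenerate subcase $\kappa=const.$ this collapses to a circular helix. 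If instead $\tau\equiv 0$, then $\alpha$ is a planar curve, yielding case $ii)$.

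Second, suppose $\alpha$ is a line segment. This is exactly case $iii)$, and it is the configuration singled out by the flat branch $K_{\phi}=0$ of Proposition 4.2, obtained by squaring \eqref{26} to get a degree-six polynomial in $t$ whose constant term is $K_{\phi}^{2}$. No further argument is needed once this case is named. Here I would emphasize a genuine difference from Lemma 3.3: the present statement makes no line-of-curvature claim in its third case, so the refinement carried out in Lemma 3.3 from \eqref{17} onward is not required, and I would deliberately omit it rather than attempt the (less transparent) binormal analogue of that computation.

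I do not expect a genuinely hard analytic step: the entire classification is read off from the single curvature identity $K_{\phi}=-\tau^{2}$ established in Proposition 4.2, exactly as in the normal case. The only point demanding care is the logical trichotomy itself---correctly translating ``$K_{\phi}$ a negative constant'' into ``$\tau$ a nonzero constant,'' cleanly isolating the vanishing-torsion subcase, and remembering to record the circular-helix degeneration that occurs when the curvature is additionally constant.
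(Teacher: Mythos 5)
Your proposal is correct and follows essentially the same route as the paper, which itself disposes of Lemma 4.3 by declaring it "similar to the proof of Lemma 3.3": the trichotomy is read off from $K_{\phi}=-\tau^{2}$ in the non-degenerate case, with the line-segment case handled separately. Your explicit observation that the line-of-curvature refinement of Lemma 3.3(iii) is not claimed here, and hence may be omitted, is a reasonable and accurate reading of the statement.
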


\begin{proof}
This proof can be done in a similar way to the proof of Lemma 3.3.
\end{proof}

Therefore, Lemma 4.3 implies that, under the same hypothesis of Theorem 1.2,
there exist four geodesics through each point $p\in \psi $ which are the
next three types:

$i)$ An anti Salkowski curve (Type 1),

$ii)$ A planar curve (Type 2),

$iii)$ A line segment (Type 3)$.$

\begin{claim}
Let $p\in \psi $ a non-umbilic point. In a neighborhood $A_{\phi }$ of $p$,
there are two proper helical geodesics which is a curve that is both a
proper circular helix and a geodesic on $\psi ,$ through any point of $%
A_{\phi }$.
\end{claim}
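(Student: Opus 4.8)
The plan is to reproduce, in the binormal setting, the counting argument that established Claim 3.5, now invoking Proposition 4.2 and the classification provided by Lemma 4.3. First I would choose an open neighborhood $A_{\phi}\subset\psi$ of $p$ consisting entirely of non-umbilic points; this is possible because the umbilic set $\psi_{1}$ is closed, so its complement is open and contains $p$. By the hypothesis of Theorem 1.2 together with Lemma 4.3, through each point $q\in A_{\phi}$ there pass four geodesics, and each of them is of Type 1 (an anti Salkowski curve), Type 2 (a planar curve), or Type 3 (a line segment).

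The heart of the argument is a dimension count at the non-umbilic point $q$. Both the Type 2 and the Type 3 geodesics are lines of curvature of $\psi$: a planar geodesic has $\tau=\tau_{g}=0$, while a line segment satisfies $\kappa=0$ and, by the analogue of the final step of Lemma 3.3 inherited through the parallel proof of Lemma 4.3, also obeys $N'=\lambda T$; in either case the geodesic torsion vanishes along the curve. Hence every Type 2 or Type 3 geodesic is tangent at $q$ to a principal direction. Since $q$ is non-umbilic there are exactly two principal directions there, so at most two of the four geodesics can be of Type 2 or Type 3. Consequently at least two of them are of Type 1, i.e. anti Salkowski curves.

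It remains to promote these two anti Salkowski geodesics to proper circular helices. For a Type 1 geodesic, equation \eqref{21} gives $K_{\phi}=-\tau^{2}$, and since $\phi$ has constant Gaussian curvature by Proposition 4.2, the torsion $\tau$ is a nonzero constant. To finish I would show that the curvature $\kappa$ of such a geodesic is also constant; with $\kappa$ constant and $\tau$ a nonzero constant the curve is a genuine circular helix that is simultaneously a geodesic of $\psi$, which is precisely a proper helical geodesic. Applying this to both Type 1 directions at $q$ then yields the two proper helical geodesics asserted by the claim.

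The main obstacle will be this last step, the constancy of $\kappa$. The relation $K_{\phi}=-\tau^{2}$ controls only the torsion, not the curvature, so $\kappa$ being constant does not follow from Proposition 4.2 alone. The idea is to exploit that two anti Salkowski geodesics pass through \emph{every} point of the open set $A_{\phi}$, each serving as the base curve of a minimal binormal surface of the same constant Gaussian curvature; feeding these compatibility conditions into the Galilean Darboux equations \eqref{3a} should rigidify the moving frame enough to force $\kappa$ to be constant, thereby upgrading the anti Salkowski curves to circular helices and completing the proof of the claim.
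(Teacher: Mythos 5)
Your proposal follows essentially the same route as the paper: the paper gives no separate proof of this claim, relying on the argument written out for the parallel Claim 3.5, which is exactly your counting argument (at a non-umbilic point at most two of the four geodesics can be of Type 2 or 3 since they must be tangent to principal directions, so at least two are anti Salkowski curves). The one step you flag as unresolved --- upgrading the anti Salkowski geodesics to proper circular helices by showing $\kappa$ is constant --- is not supplied by the paper either; the authors merely write ``If we get $\kappa = const.$, then the anti Salkowski curve turns out to be a circular helix,'' so your candour here exposes a gap in the paper's own argument rather than a deficiency in yours relative to it.
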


Then we finish the proof of Theorem 1.2.

\end{document}